\newcommand{\rr}{\mathbb{R}}
\newcommand{\p}{{\mathbf{P}}}
\def\m{\mathbf{m}}
\def\null{{\rm null}}
\def\dim{{\rm dim}}
\def\pt{\mathbf{p}}
\def\p{\mathbf{P}}
\newtheorem{theorem}{Theorem}
\newtheorem{corollary}{Corollary}
\newtheorem{example}{Example}
\newtheorem{lemma}{Lemma}
\journal{}
\def\ps@pprintTitle{%
  \let\@oddhead\@empty
  \let\@evenhead\@empty
  \def\@oddfoot{}%
  \def\@evenfoot{}%
}
\begin{document}

\begin{frontmatter}



\title{Trees with extremal Laplacian eigenvalue multiplicity}


\author[label1]{Vinayak Gupta}
\ead{vinayakgupta1729v@gmail.com}
\author[label1]{Gargi Lather}
\ead{gargilather@gmail.com}
\author[label1]{R. Balaji}
\ead{balaji5@iitm.ac.in}

\affiliation[label1]{organization={Department of Mathematics, Indian Institute of Technology Madras},
            city={Chennai},
            postcode={600036}, 
            country={India}}

\begin{abstract}
Let $T$ be a tree.
Suppose $\lambda$ is an eigenvalue of
the Laplacian matrix of $T$ with multiplicity $m_{T}(\lambda)$.
It is known that $m_{T}(\lambda) \leq p(T)-1$,
where $p(T)$ is the number of pendant vertices of $T$.
In this paper, we characterize all trees $T$ for which
there exists an eigenvalue $\lambda$ such that $m_{T}(\lambda)=p(T)-1$.
We show that such trees are precisely either paths, or
there exists an integer $q$ such that if $\alpha$ and $\beta$ are two distinct pendant vertices, then the distance $d(\alpha,\beta)$ satisfies
$d(\alpha, \beta) \equiv 2q ~{\rm{mod}}~(2q+1)$.
As a consequence, we show that $1$ is an eigenvalue of $L_T$ with multiplicity $p(T)-1$ if and only if
$d(\alpha,\beta) \equiv 2\,\mbox{mod}\, 3$ for all distinct pendant vertices $\alpha$ and $\beta$ of $T$.
\end{abstract}

\begin{keyword}
Laplacian matrix \sep eigenvalue multiplicity \sep pendant vertices \sep major vertices 
\MSC 05C05 \sep 05C50
\end{keyword}
\end{frontmatter}

\section{Introduction}
Let $T$ be a tree. 
Then the eigenvalues of the adjacency and the Laplacian matrices of $T$ can have multiplicity at most $p(T)-1$, where $p(T)$ is the number of pendant vertices of $T$: see \cite[Corollary 2.10]{wang} and \cite[Theorem 2.3]{grone}. 
The trees $T$
for which the adjacency matrices admit an eigenvalue of multiplicity $p(T)-1$ are characterized in
\cite{wong2} and \cite{wong1}. 
In particular, we note that the adjacency matrix
has an eigenvalue with multiplicity $p(T)-1$ if and only if for any two
pendant vertices $u$ and $v$,
there is an integer
$m$ such that $d(u,v)+1 \equiv m\,{\rm mod}\,(m+1)$. 
Inspired by this, we investigate
trees for which the Laplacian matrices have an eigenvalue with multiplicity $p(T)-1$. Let $L_T$ denote the Laplacian matrix of $T$.
If $T$ is a path, then all the eigenvalues of $L_T$
are simple. If $T$ is a star, then $1$ is an eigenvalue of $L_T$ with
multiplicity $p(T)-1$. 
There exist several trees, other than paths and stars, for which 
there exists an eigenvalue of $L_T$
with multiplicity $p(T)-1$. 
Our investigation reveals that, in all such trees
the distances between
any two pendant vertices satisfy a specific combinatorial relation.
The following theorem
summarizes our finding.
\begin{theorem}\label{main1}
Let $T$ be a tree with $p$ pendant vertices. Then the following are equivalent.
 \begin{enumerate}
     \item[\rm{(i)}]  There exists an eigenvalue of $L_T$ with multiplicity $p-1$.
     \item[\rm{(ii)}] Either $T$ is a path or there exists an integer $q$ such that if $\alpha$ and $\beta$ are distinct pendant vertices, then \(d(\alpha,\beta)\equiv 2q\,{\rm{mod}}\,(2q+1).\)
 \end{enumerate}
\end{theorem}
In the sequel, we
show that if $\lambda$ is an eigenvalue of $L_T$ with multiplicity $p(T)-1$,
then $\lambda$ can be written as $2(1- \cos \theta)$ for some $\theta$, and hence $\lambda \leq 4$. In view of Theorem \ref{main1}, all trees $T$ of order up to eight for which $L_T$ has an eigenvalue with multiplicity $p(T)-1$ are listed below. Let $P_n$ and $S_n$ denote the path and the star with $n$ vertices.

\begin{figure}[ht!]
\centering
\includegraphics[scale=1]{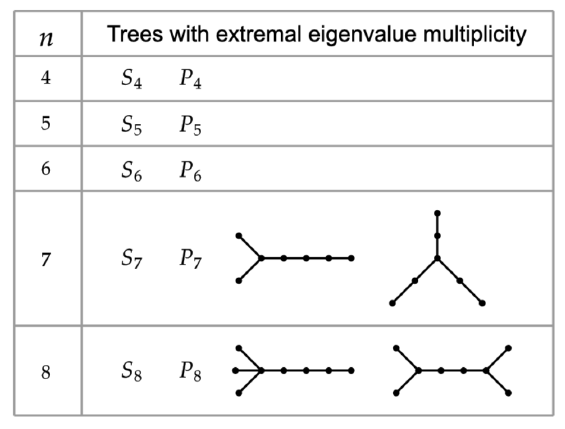}
\label{tabdiag}
\end{figure}

Some significant results are known in the literature
regarding the multiplicity of integral Laplacian eigenvalues.
If $T$ is a tree with $n$ vertices, then
Grone et al. \cite{grone} have shown that any integral eigenvalue of $L_T$ more than {\it one} is simple and divides $n$. 
A well-known result of Faria \cite{faria} states that
if $G$ is a connected graph with $1$ as a Laplacian eigenvalue, then
its multiplicity is at least $p-q$,
where $p$ and $q$ are the number of pendant and quasi-pendant vertices of $G$. 
Barik et al. \cite{barik} have shown that there exist
infinitely many trees distinct from paths for which $1$ is a Laplacian eigenvalue. 
Using Theorem \ref{main1}, we
characterize all trees $T$ for which
$1$ is an eigenvalue of $L_T$ with multiplicity $p(T)-1$. 
We show that such trees are precisely those that satisfy the relation
$d(\alpha,\beta) \equiv 2~\mbox{mod}\,3$ for all distinct pendant vertices
$\alpha$ and $\beta$.

\subsection{Preliminaries}
Let $T$ be a tree. We use the following notation.
\begin{enumerate}
    \item[\rm (i)]  We denote by $V(T)$ and $E(T)$, the set of all vertices and edges of $T$.
Let $u$ and $v$ be distinct vertices in $V(T)$. If $u$ and $v$ are adjacent in $T$, then we write
$u \sim v$. Let $d(u,v)$ be the distance between $u$ and $v$. The path connecting $u$ and $v$
will be denoted by $\p_{uv}$. If the edges of the path $\p_{uv}$ are 
$(u,a_2),(a_2,a_3),\dotsc,(a_k,v)$, then we write $u \sim a_2 \sim \cdots \sim a_k \sim v$.
\item[\rm (ii)] 
If the degree of a vertex in $T$ is at least $3$,
then it is called a major vertex of $T$. The set of all major vertices
of $T$ will be denoted by $\m(T)$. The set of all the pendant vertices
of $T$ will be denoted by $\pt(T)$.
\item[\rm (iii)] 
Let $L_T$ be the Laplacian matrix of $T$. The multiplicity of an eigenvalue 
$\lambda$ of $L_T$ is denoted by
$m_{T}(\lambda)$. 
\item[\rm (iv)] All vectors in the Euclidean space $\rr^k$ will be considered as
column vectors. If $x$ is a vector in $\rr^k$, then we denote its
$j$th component by $x_j$.
\item[\rm (v)] Let $T_1$ and $T_2$ be any two trees such that $|V(T_1)\cap V(T_2)|=1$.
Then $T_1 \circ T_2$ will be the tree with vertices $V(T_1) \cup V(T_2)$
and edges $E(T_1) \cup E(T_2)$. 
\end{enumerate}

Let $\p$ be a path. Then the
eigenvalues and eigenvectors of $L_{\p}$ can be explicitly computed.
We shall make frequent use of the following result from \cite{notes} in our proofs.
\begin{theorem}\label{th1}
    Let $\p$ be the path $1 \sim 2 \sim \dotsc \sim n-1 \sim n$.
    Then,
    \(L_\p x^j=\lambda_j x^j,\)
    where 
    \begin{equation} \label{pn}
    \lambda_j:=2\left(1-\cos \frac{\pi j}{n}\right) ~~~j=0:n-1;
    \end{equation}
    and the vector $x^j=(x^j_{1},\dotsc,x^j_{n})'$ is given by
    \begin{equation} \label{xj}
    x^j_\nu:=\cos \left(\frac{\pi j}{n} \left (\nu-\frac{1}{2}\right ) \right) ~~~\nu=1:n.
    \end{equation}
\end{theorem}

\section{Lemmas}
The proof of Theorem \ref{main1} will be accomplished by
a sequence of lemmas.
As a consequence of Theorem \ref{th1}, we first note the following.
\begin{lemma}\label{lem1}
Let $\p$ and $x_{\nu}^j$ be as in Theorem \ref{th1} and
$l \in \{0,\dotsc,n-1\}$ be such that
$\frac{l}{n}=\frac{2b+1}{2q+1}$,
where $b$ and $q$ are some integers.
If $u$ and $v$ are any two vertices of $\p$ such that
    $d(v,u) \equiv 0\,{\rm{mod}}\,(2q+1)$,
then $x^l_v=0$ if and only if $x^l_u=0$. 
\end{lemma}

\begin{proof}
Let $u$ and $v$ be vertices of $\p$ such that $d(u,v) \equiv 0\,{\rm mod}\, (2q+1)$.
Then, there exists an integer $N$ such that $d(u,v)=(2q+1)N$. Hence,
\begin{equation} \label{uv}
u=v \pm (2q+1)N.
\end{equation}
Define $s:=\frac{2b+1}{2q+1}\left(v-\frac{1}{2}\right)$.
Then, $x^{l}_v=\cos (s \pi)$ and
    \begin{align}\label{xju}
x^l_u&=\cos \left(\frac{\pi l}{n}\left(u-\frac{1}{2}\right)\right) \notag \\
&=\cos \left(\frac{\pi l}{n} \left(v-\frac{1}{2}\right) \pm \frac{\pi l}{n}\left(2q+1\right)N\right)~~~(\mbox{by}~\eqref{uv})  \\
&=\cos(s \pi \pm (2b+1)N \pi). \notag 
\end{align}

Let $m:=(2b+1)N$.
    By \eqref{xju},
 $x^{l}_u= \cos (s \pi \pm m \pi)$.
    Thus, $x^l_u=(-1)^m \cos (s \pi)$.
        Therefore, $x^l_u=0$ if and only if $x^l_v=0$.
The proof is complete.
\end{proof}

\begin{lemma} \label{b1}
Let $T$ be a tree with vertices $1,\dotsc,n$.
Let $u \in \pt(T)$ and $v$ be adjacent to $u$.
Assume that $L_u$ is the Laplacian matrix of $T \smallsetminus (u)$. 
Let $x=(x_1,\dotsc,x_n)'$ be a non-zero vector such that $L_Tx=\lambda x$. If $x_u=0$ and
$y=(x_1,\dotsc,x_{u-1},x_{u+1},\dotsc,x_n)'$,
then $x_v=0$ and $L_uy=\lambda y$.
    \end{lemma}
\begin{proof}
Without loss of generality, assume that $u=n$ and $v=1$.
Then, the vertex $n$ is pendant, $x_n=0$ and $1 \sim n$.
The $n$th row of $L_T$ is $(-1,0,\dotsc,0,1)$.
Hence, the $n$th equation in $L_{T}x=\lambda x$ is 
$x_n-x_1=\lambda x_n$.
Since $x_n=0$, we see that $x_1=0$. 
Define $y:=(x_1,\dotsc,x_{n-1})'$. As
$x_1=0$, $x_n=0$ and $L_Tx=\lambda x$, we get
$L_u y=\lambda y$. 
The proof is complete.
\end{proof}

\begin{lemma}\label{b2}
Let $T$ be a tree with vertices $1,\dotsc,n$ and
$\lambda$ be an eigenvalue of $L_T$.
If $\Omega \subseteq \{1,\dotsc,n\}$ and $|\Omega|=m_T(\lambda)-1$, then there exists a non-zero vector $x=(x_1,\dotsc,x_n)'$ such that 
    \[L_Tx=\lambda x ~~\mbox{and}~~
x_j=0 ~~~ j \in \Omega.\]
\end{lemma}

\begin{proof}
Let \[W:=\{(y_1,\dotsc,y_n)' \in \rr^n: y_j=0~~\forall j \in \Omega \}.\]
Since ${\dim}(W)+m_T(\lambda)=n+1$, the subspace
$\widetilde{W}:=W \cap \null(\lambda I -L_T)$ is non-trivial.
If $0 \neq (x_1,\dotsc,x_n)' \in \widetilde{W}$, then
$x_j=0$ for all $j \in \Omega$ and $L_Tx=\lambda x$. The proof is complete. 
\end{proof}

\begin{lemma} \label{newlem5}
Let $T$ be a tree with vertices $1,\dotsc,n$.
Assume that $T$ has at least one major vertex
and $\pt(T)=\{1,\dotsc,p\}$. Let the path $\p$ connecting $1$ and $2$ in $T$ be
given by \[1 \sim p+1 \sim p+2 \sim \cdots \sim p+k \sim 2.\]
If there exists a non-zero vector $x:=(x_1,\dotsc,x_n)'$ such that
\[L_Tx=\lambda x~~\mbox{and}~~x_{j}=0~~~\mbox{for all}~~ j \in \pt(T) \smallsetminus \{1,2\}\] 
then
\item[\rm (i)] $x_v=0$ $ \mbox{for all}~~ v \in \m(T)$.
\item[\rm (ii)] $x_j=0$ $ \mbox{for all}~~ j \notin V(\p)$.
\item[\rm (iii)] Let $f:=(x_1,x_2,x_{p+1},\dotsc,x_{p+k})'$. Then, $f$ is non-zero and
$L_{\p}f=\lambda f$.
\end{lemma}
\begin{proof}
We prove by induction on $n$. 
If $n=4$, then the result is immediate.
Let $n\geq 5$. Assume the result for all trees with fewer than $n$ vertices. 
Let $T$ have $n$ vertices.  Assume that $T$ has a major vertex and
$\pt(T)=\{1,\dotsc,p\}$.
Suppose $x:=(x_1,\dotsc,x_n)'$ is a non-zero vector such that
$L_Tx=\lambda x$ and $x_j=0$ for all $j \in \pt(T) \smallsetminus \{1,2\}$.
Let $\theta \sim 3$. 
Since $x_3=0$,
by Lemma \ref{b1}, we get
$x_{\theta}=0$.
Define
$T_3:=T \smallsetminus (3)$.
Then, $\m(T) \subseteq \m(T_3) \cup \{\theta\}$.
By Lemma $\ref{b1}$,
$L_{T_3}(x_1,x_2,x_4,\dotsc,x_n)'=\lambda (x_1,x_2,x_4,\dotsc,x_n)'$.
Since $\pt(T_3) \subseteq \{\theta\}\cup\{1,\dotsc,p\} \smallsetminus \{3\}$,
$x_{j}=0$ for all $j \in \pt(T_3) \smallsetminus \{1,2\}$. 
Thus, $T_3$ satisfies the induction hypothesis. 
Hence, 
$x_j=0$ for all $j \in \m(T_3)$.
Since
$\m(T) \subseteq \m(T_3) \cup \{\theta\}$, it follows that
$x_j=0$ for all $j \in \m(T)$.
This proves (i).
The proofs of (ii) and (iii) follow immediately from the induction hypothesis.
This completes the proof.
\end{proof}

\begin{lemma}\label{v1}
    Let $T_1$ and $T_2$ be trees with vertices $1,\dotsc,n$ and 
    $n,\dotsc,n+m$, respectively. If
  $y=(y_1,\dotsc,y_{n-1},0)'$ is a non-zero vector in $\rr^{n}$ such that
  $L_{T_1}y=\lambda y$, then $L_{T_{1} \circ T_2}(y_1,\dotsc,y_{n-1},0,\dotsc,0)'=\lambda (y_1,\dotsc,y_{n-1},0,\dotsc,0)'$.
\end{lemma}

\begin{proof}
 Let $L_{T_1}:=(\alpha_{ij})$ and $L_{T_1 \circ T_2}:=(\beta_{ij})$.
 Define 
\[\widetilde{y}:=(y_1,\dotsc,y_{n-1},y_n,\dotsc,y_k)',\]
where $k:=m+n$ and $y_j=0$ for $j\geq n$.
We need to show that 
\[\sum_{j=1}^{k} \beta_{ij}y_j=\lambda y_i~~\mbox{for all}~~ i=1:k.\]
Since $L_{T_1}y=\lambda y$ and $y_n=0$, we get
\begin{equation}\label{lem41}
   \sum_{j=1}^{n-1}\alpha_{ij}y_j=\lambda y_i ~~~i=1:n. 
\end{equation}
Let $(w_1,\dotsc,w_k)':=L_{T_1 \circ T_2} \widetilde{y}$. 
Then, \[w_i=\sum_{j=1}^{k}\beta_{ij}y_j~~~i=1:k.\] 
Since $y_j=0$ for all $j=n:k$, 
\begin{equation}\label{lem42}
w_i=\sum_{j=1}^{n-1}\beta_{ij}y_j~~~i=1:k.
\end{equation}
If $i \in \{1,\dotsc,n\}$ and $j \in \{1,\dotsc,n-1\}$, then
$\alpha_{ij}=\beta_{ij}$. If $i \in \{n+1,\dotsc,k\}$ and
$j \in \{1,\dotsc,n-1\}$, then $\beta_{ij}=0$.
Thus, by \eqref{lem41} and \eqref{lem42},
\[w_i=
\begin{cases}
    y_i ~~&i=1:n \\
    0  ~~&i=n+1:k.
\end{cases}
\]
Thus, $L_{T_{1} \circ T_2} \widetilde{y}=\lambda \widetilde{y}$. This concludes the proof.   
\end{proof}

\begin{lemma} \label{path}
Let $G$ be the path
\[1:=u \sim 2 \sim \cdots \sim k_1+1:=v \sim k_1+2 \sim \cdots \sim k_1+k_2 \sim k_1+k_2+1:=w.\]
If there exists a positive integer $q$ such that 
$d(u,v)\equiv q\,{\rm mod}\,(2q+1)$ 
and $d(w,v)\equiv q\,{\rm mod}\,(2q+1)$, then
for any integer $b \in [0,q)$,
\[\lambda:=2\left(1-\cos \left(\frac{2b+1}{2q+1}\pi\right)\right)\]
is an eigenvalue of $L_G$. Furthermore, there exists an eigenvector
$y$ corresponding to $\lambda$ such that $y_v=0$ and $y_u=\cos\left( \left(\frac{2b+1}{2q+1}\right)\frac{\pi}{2} \right).$
\end{lemma}
\begin{proof}
We note that
$d(u,v)=k_1$ and $d(w,v)=k_2$. Since $k_j\equiv q\,{\rm{mod}}\,(2q+1)$, there exist integers $N_1$ and $N_2$ such that
\begin{equation} \label{kj}
k_j=(2q+1)N_j+q~~~~j=1,2.
\end{equation}
Define
$k:=k_1+k_2+1=(2q+1)(N_1+N_2+1)$.
Let $b$ be an integer in $[0,q)$ and
$\gamma:=\frac{1}{k} (2b+1)(N_1+N_2+1)$. Then,
 \(\gamma=\frac{2b+1}{2q+1}.\)
Define
\[\lambda:=2\left(1-\cos (\gamma \pi) \right)~~\mbox{and}~~y_j:=\cos\left(\gamma \left(j-\frac{1}{2} \right)\pi \right)~~~j=1:k.\]
By Theorem \ref{th1},
\(L_G(y_1,\dotsc,y_{k})'=\lambda (y_1,\dotsc,y_{k})',\)
where
\[y_1=\cos\left(\frac{\gamma \pi}{2}\right)~~\mbox{and}~~y_v=
        \cos \left(\gamma\left( k_1+\frac{1}{2} \right)\pi  \right).\]
By \eqref{kj}, $$y_v= \cos \left( \frac{(2b+1)(2N_1+1)}{2}\pi \right).$$
        Thus,  $y_v=0$. This completes the proof.
        \end{proof}

\begin{lemma}\label{equiv}
  Let $T$ be a tree with at least one major vertex and $q$ be an integer. Then, the following are equivalent.
  \begin{enumerate}
      \item[\rm{(i)}] $d(u,w) \equiv 2q\,{\rm{mod}}\,(2q+1)$ for all distinct $u,w \in \pt(T)$.
      \item[\rm{(ii)}] $d(u,\theta) \equiv q\, {\rm{mod}}\, (2q+1)$ for all $u \in \pt(T)$ and $\theta \in \m(T)$.
  \end{enumerate}
  \end{lemma}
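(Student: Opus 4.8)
The plan is to prove both implications by tracking distances modulo $3$ along paths in the tree. For (i) $\Rightarrow$ (ii), fix a major vertex $\theta$ and a pendant vertex $u$; since $\deg_T(\theta) \geq 3$, there are at least three components of $T \setminus \theta$, and each contains at least one pendant vertex of $T$. Pick pendant vertices $u_1, u_2, u_3$ in three distinct such components (choosing $u_1 = u$ if $u$ itself lies in one of them; otherwise $u$ sits in some component and I include it among the three). The point is that for pendant vertices $u_i, u_j$ lying in different components of $T \setminus \theta$, the unique path $P_{u_i,u_j}$ passes through $\theta$, so $d(u_i,u_j) = d(u_i,\theta) + d(\theta,u_j)$. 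Writing $a_i = d(u_i,\theta) \bmod 3$, hypothesis (i) gives $a_i + a_j \equiv 2 \bmod 3$ for all $i \neq j$ among the chosen three. This is a small system of congruences: subtracting the equation for $(i,j)$ from that for $(i,k)$ gives $a_j \equiv a_k$, so all three $a_i$ are equal, and then $2a_i \equiv 2$, forcing $a_i \equiv 1 \bmod 3$. Hence $d(u,\theta) \equiv 1 \bmod 3$; since $\theta$ and $u$ were arbitrary, (ii) holds.

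For (ii) $\Rightarrow$ (i), take distinct pendant vertices $u, w$. If the path $P_{u,w}$ contains a major vertex $\theta$, then $d(u,w) = d(u,\theta) + d(\theta,w) \equiv 1 + 1 = 2 \bmod 3$ by (ii), and we are done. The remaining case is when $P_{u,w}$ contains no major vertex, i.e. every internal vertex of $P_{u,w}$ has degree $2$; since $u,w$ are pendant, $P_{u,w}$ is then a connected component of $T$, forcing $T = P_{u,w}$ to be a path. But $T$ has at least one major vertex by hypothesis, a contradiction, so this case does not occur. Thus (i) holds.

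The main obstacle, such as it is, is purely bookkeeping: making sure in the (i) $\Rightarrow$ (ii) direction that one can actually find three pendant vertices of $T$ in three distinct components of $T \setminus \theta$ — this uses that in a tree every leaf-free subtree is trivial, so each of the $\geq 3$ components of $T \setminus \theta$, being a subtree, contains at least one leaf of $T$ (a leaf of the component that is not $\theta$ is a leaf of $T$ unless the component is a single vertex adjacent to $\theta$, which is itself a pendant vertex of $T$). Once that is in hand, the congruence manipulation is immediate. I would also remark that the hypothesis "$T$ has at least one major vertex" is exactly what rules out the degenerate path case in the second implication and guarantees the set $\m(T)$ over which the quantifier in (ii) ranges is nonempty, so the statement is not vacuous.
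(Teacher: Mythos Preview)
Your proof is correct and self-contained. The paper, by contrast, does not give a direct argument here: it simply invokes \cite[Lemma~7]{vino} with the parameter $q=1$, that lemma being a more general statement covering other moduli. Your approach is the natural direct one --- exploit that a major vertex $\theta$ separates the tree into at least three components, each containing a pendant, so that the pairwise congruences $a_i+a_j\equiv 2\bmod 3$ among three residues force $a_i\equiv 1$ for each $i$; and for the converse, observe that any path between two pendant vertices in a tree possessing a major vertex must contain some major vertex. This buys you independence from the cited reference at essentially no cost, since the argument is short. One minor phrasing issue: in your parenthetical remark about leaves of components of $T\setminus\theta$, note that $\theta$ itself lies in no component, so the clause ``a leaf of the component that is not $\theta$'' is vacuous; what you actually need (and clearly intend) is that if a component $C$ has at least two vertices then it has at least two leaves, at most one of which is adjacent to $\theta$ in $T$, so some leaf of $C$ is pendant in $T$, while if $|C|=1$ its unique vertex is pendant in $T$.
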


\begin{proof}
 We prove (i)$\Rightarrow$(ii). Let $u\in \pt(T)$ and $\theta\in \m(T)$. We claim that $d(u,\theta)\equiv q\,{\rm{mod}}\,(2q+1)$. Since $\theta$
 is a major vertex, we can find two pendant vertices $u_2$ and $u_3$ such that $\theta$ lies on the paths $\p_{uu_2}$, $\p_{uu_3}$ and $\p_{u_2u_3}$. 
 Set
 \[k_1:=d(u,\theta),~~k_2:=d(u_2, \theta)~~~\mbox{and}~~k_3:=d(u_3,\theta).\] 
 By (i),
 \begin{equation}\label{(1)}
     k_i+k_j \equiv 2q\,{\rm{mod}}\,(2q+1)~~~i,j=1:3.
     \end{equation}
Thus, \[2(k_1+k_2+k_3) \equiv 6q\, {\rm{mod}}\, (2q+1).\]  Since ${\rm{gcd}}(2,2q+1)=1$,
\begin{equation}\label{(2)}
    k_1+k_2+k_3\equiv 3q\,{\rm{mod}}\, (2q+1).
\end{equation}
By \eqref{(1)} and \eqref{(2)}, 
\[k_i\equiv q\,{\rm{mod}}\,(2q+1)~~~i=1:3.\]
This proves (ii).

 We now prove (ii) $\Rightarrow$ (i). Let $u$ and $w$ be two distinct vertices in $\pt(T)$ and 
$\theta \in \m(T)$ be such that $\theta \in V(\p_{uw})$. Then,
\(d(u,w)=d(u,\theta)+d(\theta,w).\)
By our assumption,
\(d(u,w)\equiv 2q\,{\rm{mod}}\,(2q+1).\)
The proof is complete.
\end{proof}

\begin{lemma}\label{lem6base}
Let $T$ be a tree with exactly one major vertex $v$
and $p$ pendant vertices.
Assume that there exists an integer $q$ 
such that
\[d(u_i,u_j)\equiv 2q\,{\rm{mod}}\,(2q+1)~~\mbox{for all distinct}~~u_i,u_j \in \pt(T).
\]
If $b$ is any integer such that $b \in [0,q)$, then
\[\lambda:=2\left(1- \cos \left(\frac{2b+1}{2q+1}\pi\right) \right)\]
is an eigenvalue of $L_T$. Furthermore,
there exist linearly independent vectors $x^1,\dots,x^{p-1}$
such that 
\begin{enumerate}
\item[\rm (a)] $L_Tx^j=\lambda x^j$ for all $j=1:p-1$.
\item[{\rm (b)}] 
$x^j_{v}=0$ for all $j=1:p-1$. If $s \in V(T)$ and
$d(s,v) \equiv 0\,{\rm{mod}}\,(2q+1)$, then $x^j_{s}=0$ for all $j=1:p-1$. 
\end{enumerate}
\end{lemma}

\begin{proof}
 Let $n:=V(T)$. We prove the result by induction on $p$. Let $p=3$
and $\pt(T)=\{u_1,u_2,u_3\}$.
Define
$$k_j:=d(u_j,v)~~~j=1:3.$$
By Lemma \ref{equiv},
\[k_j\equiv q\,{\rm{mod}}\,(2q+1)~~~j=1:3.\]
Let $N_1$, $N_2$ and $N_3$ be such that
\[k_j=(2q+1)N_j+q~~~j=1:3.\]
Hence,
\begin{align} \label{kl+kt}
k_1+k_2+1=(2q+1)(N_1+N_2+1).
\end{align}
Let $b$ be an integer such that $b \in [0,q)$. Define
\begin{equation} \label{deltadefn}
\delta:=(2b+1)(N_1+N_2+1)~~\text{and}~~\gamma:=\frac{\delta}{k_1+k_2+1}. 
\end{equation}
Since $0 \leq b < q$,
\begin{equation} \label{deltae}
\delta < (2q+1) (N_1+N_2+1).
\end{equation}
By \eqref{kl+kt} and $\eqref{deltae}$,
$\delta$ is an integer in  the interval $(0,k_1 + k_2 +1)$.
Let the path $\p_{u_1u_2}$ in $T$ be
\[1:=u_1 \sim 2 \sim 3 \sim \cdots \sim k_1+1:=v \sim \cdots \sim k_1+k_2 \sim k_{1}+k_{2}+1:=u_2.\]
In view of Theorem \ref{th1}, 
\[\lambda:=2\left(1- \cos\left(\frac{{\delta \pi}}{{k_1+k_2+1}}\right)\right)\]
is an eigenvalue of $L_{\p_{u_1u_2}}$ and
there exists a vector $y=(y_1^{\delta},\dotsc,y_{k_1+k_2+1}^{\delta})'$ such that
$L_{\p_{u_1u_2}}y=\lambda y$, where 
\begin{equation}\label{eigvec}
 y_j^{\delta}:=\cos\left(\frac{\delta}{k_1+k_2+1} \left(j-\frac{1}{2} \right)\pi \right)~~~j=1:k_1+k_2+1.   
\end{equation}
 Since $k_1+1=v$, by \eqref{deltadefn} and \eqref{eigvec}, we obtain 
\[y_v^{\delta}=y_{k_1+1}^{\delta}=
        \cos \left(\gamma\left( k_1+\frac{1}{2} \right)\pi  \right).\]
As $k_1=(2q+1)N_1+q$ and $\gamma=\frac{2b+1}{2q+1}$,
       \[y_v^{\delta}= \cos \left( \frac{(2b+1)(2N_1+1)}{2}\pi \right).\]
        Thus,  $y_v^{\delta}=0$. Let $\p_{vu_3}$ be the path from $v$ to $u_3$.
        Then,
        $T=\p_{u_1u_2} \circ \p_{v u_3}$.
Define $f=(f_1,\dotsc,f_n)'$ by
\begin{equation}\label{xr}
f_r:=
\begin{cases}
y_r^{\delta} & r\in V(\p_{u_1u_2}) \\
0 & \mbox{else.}
\end{cases}    
\end{equation}
Since $y_v^\delta=0$, Lemma \ref{v1} implies that
$L_Tf=\lambda f$.
As $f_v=y_v^{\delta}$ and $y_v^{\delta}=0$, we get $f_v=0$.
Let $s \in V(T)$ be  such that $d(s,v) \equiv 0\,{\rm{mod}}\,(2q+1)$. 
We have two cases.
If $s\notin V(\p_{u_1u_2})$, then
by \eqref{xr}, $f_s=0$. If $s\in V(\p_{u_1u_2})$, then by Lemma \ref{lem1}, 
$y_s^{\delta}=0$, and again by
\eqref{xr},
\(f_s=0.\)
Thus, $f$ satisfies (a) and (b).

Applying the same argument to the path $\p_{u_2u_3}$,
we obtain a non-zero vector $g=(g_1,\dotsc,g_n)'$ such that
\begin{enumerate}
\item[(i)] $L_Tg=\lambda g$ 
\item[(ii)] $g_v=0$ and $g_i=0$ for all $i\notin V(\p_{u_2u_3})$ 
\item[(iii)] $g_s=0$ for all $s$ such that
$d(s,v)\equiv 0\,{\rm{mod}}\,(2q+1)$.
\end{enumerate}
Thus, $g$ satisfies (a) and (b).

We now claim that $f$ and $g$ are linearly independent.
Since
\[f_{u_1}=f_1=\cos\left(\frac{\gamma \pi}{2} \right) ~~\mbox{and} ~~0<\gamma<1,\]
we deduce that $f_1 \neq 0$.
Because
 $1=u_1 \notin V(\p_{u_2u_3})$, we see that
$g_1=0$.
Therefore, $f$ and $g$ are linearly independent.
Hence, the result is true for $p=3$.
We shall assume the result for trees with one major vertex and at most $p-1$ pendant vertices. 

Let $p>3$ and
$T$ be a tree with
$p$ pendant vertices. Let $u_1$ and $u_2$ be two distinct vertices in $\pt(T)$. Let the path $\p_{u_1u_2}$ be 
\[1:=u_1 \sim 2 \sim  \cdots \sim k_1+1:=v \sim \cdots \sim k_1+k_2+1:=u_2.\]

Let $H$ be the component of $T\smallsetminus ({k_1})$ such that $v\in V(H)$. 
Then, $|\pt(H)|=p-1$. 
Since $p>3$, $H$ has a major vertex. 
By the induction hypothesis,
there exist linearly independent vectors $y^1,\dots,y^{p-2}$ such that
\begin{equation} \label{lh}
L_Hy^j=\lambda y^j,~~y^{j}_v=0~~\mbox{and}~~y^{j}_s=0~~~j=1:p-2
\end{equation}
for all $s \in V(H)$ such that
$d(s,v)\equiv 0\,{\rm{mod}}\,(2q+1)$.
Define $x^1,\dots,x^{p-2}$ in $\mathbb{R}^n$ as follows:
\begin{equation}\label{xi}
x^j_r:=
\begin{cases}
y^j_r & r\in V(H) \\
0 & \mbox{else.}
\end{cases}    
\end{equation}
As \[T=H\circ \p_{u_1v}~~\mbox{and}~~
y^j_v=0 ~~~ j=1:p-2;\] 
by Lemma \ref{v1}, 
\[L_T x^j=\lambda x^j~~~j=1:p-2.\]
In view of $\eqref{lh}$ and \eqref{xi},
\[x^j_v=0 ~~\mbox{and}~~x^j_s=0~~~j=1:p-2\]
for all $s\in V(T)$
such that $d(s,v) \equiv 0\,{\rm{mod}}\,(2q+1)$.
As $y^1,\dotsc,y^{p-2}$ are 
linearly independent, $x^1,\dotsc,x^{p-2}$
are linearly independent.

We now construct another eigenvector $x^{p-1}$. 
By Lemma \ref{equiv},
$$k_j\equiv q\,{\rm{mod}}\,(2q+1)~~j=1,2.$$ 
Hence, there exist integers $N_1$ and $N_2$ such that 
\[k_1=(2q+1)N_1+q~~\mbox{and}~~k_2=(2q+1)N_2+q.\] 
Therefore,
\begin{equation}\label{ksum}
    k_1+k_2+1=(2q+1)(N_1+N_2+1).
\end{equation}
Let $b$ be an integer in $[0,q)$. Put
\begin{equation} \label{deltadefn1}
\delta:=(2b+1)(N_1+N_2+1)~~\text{and}~~\gamma:=\frac{2b+1}{2q+1}. 
\end{equation}
By Theorem \ref{th1}, 
\[L_{\p_{u_1u_2}}(h_1^{\delta},\dotsc,h_{k_1+k_2+1}^{\delta})'=\lambda (h_1^{\delta},\dotsc,h_{k_1+k_2+1}^{\delta})',\]
where
\begin{equation}\label{hjdel}
 \lambda:=2\left(1-\cos (\gamma \pi) \right)~~\mbox{and}~~h_j^{\delta}:=\cos\left(\frac{\delta}{k_1+k_2+1} \left(j-\frac{1}{2} \right)\pi \right).   
\end{equation}
In particular, 
\[h_v^{\delta}=h_{k_1+1}^{\delta}=
        \cos \left(\gamma\left( k_1+\frac{1}{2} \right)\pi  \right).\]
Since $k_1=(2q+1)N_1+q$, we have
       $$h_v^{\delta}= \cos \left( \frac{(2b+1)(2N_1+1)}{2}\pi \right).$$
        Thus,  
        \begin{equation} \label{hv0}
            h_v^{\delta}=0.
        \end{equation}
        Define $x^{p-1}\in \rr^n$ by
\begin{equation}\label{p-1}
x^{p-1}_r:=
\begin{cases}
h_r^{\delta} & r\in V(\p_{u_1u_2}) \\
0 & \mbox{else.}
\end{cases}    
\end{equation}
Since $h_v^{\delta}=0$, using Lemma \ref{v1}, we obtain
$L_Tx^{p-1}=\lambda x^{p-1}$.
By \eqref{hv0} and \eqref{p-1},
$x^{p-1}_v=0$. Let $s\in V(T)$ be such that $d(s,v)\equiv 0\,{\rm{mod}}\,(2q+1)$. 
By Lemma  \ref{lem1}, 
$h_s^{\delta}=0$ for all $s\in V(\p_{u_1u_2})$.
Since $h_s^{\delta}=x_s^{p-1}$, we get $x_s^{p-1}=0$. If $s\notin V(\p_{u_1u_2})$, then \eqref{p-1} implies that $x_s^{p-1}=0$. Thus, $x^{p-1}$ satisfies condition (b).
By \eqref{xi},
\begin{equation}\label{xu1pi}
 x^j_{u_1}=0~~\mbox{for all}~~j=1:p-2.   
\end{equation}
By \eqref{hjdel} and \eqref{p-1},
\begin{equation*}
  x^{p-1}_{u_1}= \cos\left(\left( \frac{2b+1}{2q+1}\right)\frac{\pi}{2} \right). 
\end{equation*}
Since $0<\frac{2b+1}{2q+1}<1$, 
\begin{equation}\label{xjup-1}
  x^{p-1}_{u_1}\neq 0. 
\end{equation}
By \eqref{xu1pi} and \eqref{xjup-1}, $x^1,\dotsc,x^{p-1}$ are linearly independent.
The proof is complete.
\end{proof}

\begin{lemma}\label{sufficient}
Let $T$ be a tree with at least one major vertex and $p$ pendant vertices.
If there exists an integer $q$ such that
$d(\alpha,\beta)\equiv 2q\,{\rm{mod}}\,(2q+1)$ for all distinct $\alpha,\beta \in \pt(T)$, then
for any integer $b\in[0,q)$, 
$$\lambda:=2\left(1-\cos \left(\frac{2b+1}{2q+1}\pi\right) \right)$$
is an eigenvalue of $L_T$. Associated with each $\lambda$, there exist 
linearly 
independent eigenvectors $x^1,\dots,x^{p-1}$ such that
\begin{enumerate}
    \item[{\rm{(a)}}] If $m \in \m(T)$, then $x^{j}_m=0$ for all $j=1:p-1$.
     \item[{\rm{(b)}}] If $s \in V(T)$, $m \in \m(T)$ and $d(s,m) \equiv 0\,{\rm{mod}}\,(2q+1)$, then $x^j_s=0$ for all $j=1:p-1$.
\end{enumerate}
\end{lemma}

\begin{proof}
We shall use induction on $p$. To get started, we note that by Lemma \ref{lem6base}, the desired holds for $p=3$ 
because then the tree has exactly one major vertex.
We assume the result for all trees with at most $p-1$ pendant vertices. 
Let $T$ be a tree with $p$ pendant vertices and
$q$ be an integer such that if $\alpha$ and $\beta$ are any two distinct vertices
in $\pt(T)$, then
\begin{equation}\label{c(i)}
 d(\alpha,\beta)\equiv 2q\,{\rm{mod}}\,(2q+1).   
\end{equation}
We now choose $u$ and $w$ in $\pt(T)$ such that only one of the vertices in the path $\p_{uw}$
is a major vertex. 
Let \(\m(T) \cap V(\p_{uw})=\{v\}.\)
We assume that the paths  $\p_{vu}$ and $\p_{vw}$ are given by
\[v \sim u_1 \sim \cdots \cdots \sim u_{k_1-1} \sim u ~~\mbox{and}~~
v \sim w_1 \sim \cdots \sim w_{k_2-1} \sim w.\]
Let $H$ be the component of $T\smallsetminus (u_1)$ that
contains the vertex $v$. Then,
$\pt(T)=\pt(H) \cup \{u\}$. 
Hence,
$|\pt(H)|=p-1$.
By \eqref{c(i)},
$d(\alpha,\beta) \equiv 2q\,{\rm{mod}}\,(2q+1)$ for all distinct vertices $\alpha$ and $\beta$ in $\pt(H)$.
Therefore, $H$ satisfies the induction hypothesis. 
Let $b \in [0,q)$. Define
\[\lambda:=2\left(1-\cos\left(\frac{2b+1}{2q+1} \pi\right) \right).
\]
Then, there exist linearly independent vectors $g^1,\dotsc,g^{p-2}$ such that for all $j$
 \begin{enumerate}
     \item[{\rm (a1)}] $L_Hg^j=\lambda g^j$.
     \item[\rm (a2)] $g^j_{\nu}=0$ for all $\nu \in \m(H)$.
     \item[\rm (a3)] $g^j_{\nu}=0$ for all $\nu \in \{s \in V(H):d(s,m) \equiv 0\,{\rm{mod}}\,(2q+1)~\mbox{for some}~m \in \m(H)\}$.
 \end{enumerate}
 We now show that
\begin{equation} \label{gjv=0}
g^j_v=0~~~j=1:p-2.
\end{equation}

If $v \in \m(H)$, then \eqref{gjv=0} follows from (a2).

Suppose $v\notin \m(H)$. Then there are two cases.
If each vertex in $H$ has degree at most two, then $v$
is the only major vertex in $T$ with degree $3$. 
Now, $\eqref{gjv=0}$ follows from Lemma \ref{lem6base}. 
If there exists a vertex in $H$ with degree at least $3$,
then let $z \in \m(H)$. 
Now, $z \in \m(T)$ and the only major vertex of $T$ on the path $\p_{uw}$ is $v$. Thus, $z$ is not on the path $\p_{vw}$.  
By \eqref{c(i)} and Lemma \ref{equiv},
\begin{equation*}
 d(z,w)\equiv q\,{\rm{mod}}\,(2q+1)~~\text{and}~~d(v,w)\equiv q\,{\rm{mod}}\,(2q+1).   
\end{equation*}
Let $N_1$ and $N_2$ be such that
\begin{equation}\label{l11e1}
    d(z,w)=(2q+1)N_1+q~~\text{and}~~d(v,w)=(2q+1)N_2+q.
\end{equation}
Since $v$ must lie on the path $\p_{zw}$,
\begin{equation}\label{l11e2}
d(z,v)=d(z,w)-d(v,w).    
\end{equation}
By \eqref{l11e1} and \eqref{l11e2},
\begin{equation}\label{distvm}
    d(z,v)\equiv 0\,{\rm{mod}}\,(2q+1).
\end{equation}
Thus, \eqref{gjv=0} follows from (a3).

Let $n:=|V(T)|$.
Define $f^1,\dotsc,f^{p-2}$ in $\rr^n$ as follows:
\begin{equation}\label{fj}
f^j_r:=
\begin{cases}
g^j_r & r\in V(H) \\
0 & \mbox{else.}
\end{cases}    
\end{equation}
Since $g^1,\dots,g^{p-2}$ are linearly independent, $f^1,\dots,f^{p-2}$ are also linearly independent.
We observe that
$T=H \circ \p_{vu}$.
By Lemma \ref{v1}, \eqref{gjv=0} and \eqref{fj}, we have
\[L_T f^j=\lambda f^j~~j=1:p-2.\]

By \eqref{c(i)} and Lemma $\eqref{equiv}$,
$$d(v,u)\equiv q\,{\rm{mod}}\,(2q+1)~\text{and}~d(v,w)\equiv q\,{\rm{mod}}\,(2q+1).$$ 
Using Lemma \ref{path}, we get
a vector $y$ such that 
\begin{equation}\label{yv0}
L_{\p_{uw}}y=\lambda y,~~ y_v=0~~\mbox{and}~~y_u=\cos\left( \left(\frac{2b+1}{2q+1}\right)\frac{\pi}{2} \right).    
\end{equation}
Equation \eqref{yv0} and Lemma \ref{lem1} imply that
\begin{equation} \label{d(s,v)}
y_j=0~~\mbox{for all}~j \in \{s \in V(\p_{uw}):d(s,v) \equiv 0\,{\rm{mod}}\, (2q+1)\}.
\end{equation}
Define $f^{p-1}\in \rr^n$, where
\begin{equation}\label{xp-1}
f^{p-1}_r:=
\begin{cases}
y_r & r\in V(\p_{uw}) \\
0 & \mbox{else.}
\end{cases}    
\end{equation}
Let $\widetilde{H}$ be the component of $H \smallsetminus (w_1)$ containing the vertex $v$.
Then, \[T=\widetilde{H} \circ \p_{uw}.\]
Since $y_v=0$, by Lemma \ref{v1}, we get
\[L_Tf^{p-1}=\lambda f^{p-1}.\]
By \eqref{fj},
\begin{equation} \label{fp-10}
f^{j}_u=0 ~~~ j=1:p-2.
\end{equation}
In view of \eqref{yv0} and \eqref{xp-1},
\[f^{p-1}_u=\cos\left(\left( \frac{2b+1}{2q+1}\right)\frac{\pi}{2} \right).\] 
Since $0<\frac{2b+1}{2q+1}<1$, 
\begin{equation} \label{fp-1}
f^{p-1}_u \neq 0.
\end{equation}
Equations
\eqref{fp-10} and \eqref{fp-1} imply that 
$f^1,\dotsc,f^{p-1}$ are linearly independent. 
We now show that 
$f^1,\dotsc,f^{p-1}$ satisfy
(a) and (b). 
By (a2), \eqref{gjv=0} and \eqref{fj}, we see that if $m \in \m(T)$ then,
\[f^j_m=g^j_m=0~~~j=1:p-2.\] Thus, $f^1,\dotsc,f^{p-2}$ satisfy
(a). 
We now show that these vectors satisfy (b).
Let $x \in V(T)$ and $m \in \m(T)$ be such that 
$d(x,m) \equiv 0\,{\rm{mod}}\,(2q+1)$.
We claim that
\begin{equation} \label{conclusion}
f^{j}_x=0~~~j=1:p-2.
\end{equation}
There are two cases.
\begin{enumerate}
    \item[(c1)] Suppose $m\neq v$. If $x\in V(H)$, then 
    \eqref{conclusion} follows from (a3). If $x\notin V(H)$, then $\eqref{conclusion}$ follows
    from \eqref{fj}.
    \item[(c2)] Suppose $m=v$. 
    If $\m(T)=\{v\}$, then $\eqref{conclusion}$ follows as a consequence of Lemma \ref{lem6base}.
    Suppose $\m(T)$ has at least two vertices. Let $z \in \m(T)$ be such that $z$ and $v$ are the only major vertices in
    the path $\p_{zv}$. Then, 
    \begin{equation*}
d(z,x)=
\begin{cases}
 d(z,v)-d(x,v), & x \in V(\p_{zv}) \\
d(z,v)+d(x,v), & x \notin V(\p_{zv}), ~d(v,x)<d(z,x) \\
d(x,v)-d(z,v),  & x \notin V(\p_{zv}), ~d(v,x)>d(z,x).
\end{cases}
\end{equation*}
By \eqref{distvm}, \[d(z,v) \equiv 0\,{\rm{mod}}\,(2q+1).\] We have \[d(x,v)\equiv 0\,{\rm{mod}}\,(2q+1).\]
The above two equations give 
\[d(z,x)\equiv 0\,{\rm{mod}}\,(2q+1).\]
Now, $\eqref{conclusion}$ follows from (c1).   
\end{enumerate}
Thus, $f^1,\dotsc,f^{p-2}$ satisfy (b).
We now show that $f^{p-1}$ satisfies (a) and (b). 
Let $m\in \m(T)$. Suppose $m\neq v$. 
Then, $m\in \m(H)$ and by \eqref{xp-1}, we get $f^{p-1}_m=0$. If $m=v$, then by \eqref{yv0}, $y_v=0$. By \eqref{xp-1}, $f^{p-1}_v=0$.
Thus, $f^{p-1}$ satisfies (a). Now, let $z\in \m(T)$ and $x\in V(T)$ be such that 
\begin{equation}\label{stareqn}
d(x,z)\equiv 0\,{\rm{mod}}\,(2q+1).    
\end{equation}
Then, there are two cases:
\begin{enumerate}
    \item[(A)]  Suppose $z=v$. If $x\in V(\widetilde{H})$, then by \eqref{xp-1}, we obtain $f^{p-1}_x=0$. 
    If $x \notin V(\widetilde{H})$, then $x\in V(\p_{uw})\smallsetminus \{v\}$.    Since $y_v=f^{p-1}_v=0$, by Lemma \ref{lem1}, we get $y_x=f^{p-1}_x=0$.
    \item[(B)] Suppose $z\neq v$. Then $z\in \m(H)$. If $x\in V(\widetilde{H})$, then 
    by \eqref{xp-1}, it follows that
    $f^{p-1}_x=0$. If $x\in V(\p_{uw})\smallsetminus \{v\}$, then $d(v,x)=d(z,x)-d(z,v)$.
    In view of \eqref{distvm} and \eqref{stareqn}, we get $d(v,x)\equiv 0\,{\rm{mod}}\,(2q+1)$. 
    By (A), we get $f^{p-1}_x=0$.
\end{enumerate}
Thus, $f^{p-1}$ satisfies (b). This concludes the proof.
\end{proof}

\begin{lemma} \label{necessary}
 Let $T$ be a tree with at least one major vertex. 
 If $\lambda$ is an eigenvalue of $L_T$ with multiplicity 
 $|\pt(T)|-1$,
 then there exists an integer $q$ such that
 $$d(\alpha,\beta) \equiv 2q\,{\rm{mod}}\,(2q+1)~~\text{for all distinct}~~\alpha, \beta \in \pt(T).$$    
\end{lemma}
\begin{proof}
Let the vertices of $T$ be $1,\dotsc,n$ and $p:=|\pt(T)|$. 
Suppose $\lambda$ is an eigenvalue of $L_T$ such that $m_T(\lambda)=p-1$. 

We claim that there exist integers $a_1$ and $a_2$ such that $\mbox{gcd}(a_1,a_2)=1$ and
if $r$ and $s$ are any two distinct pendant vertices of $T$, then \[\frac{\xi_{rs}}{d(r,s)+1}=\frac{a_1}{a_2},\] 
for some integer $\xi_{rs}$ in the interval $(0,d(r,s)]$. 
    To prove this, we consider two distinct pendant vertices $r$ and $s$. 
Let the path $\p_{rs}$ be  \[1:=r \sim 2 \sim \cdots \sim k+1 \sim \cdots \sim d(r,s)+1:=s.\]
    We assume that $k+1 \in \m(T)$. In view of Lemma \ref{b2}, there exists a non-zero vector $g=(g_1,\dotsc,g_n)' \in \rr^n$ such that 
    \[L_Tg=\lambda g~~\mbox{and}~~g_{j}=0 ~~~ j \in \pt(T)\smallsetminus \{r,s\}.\] 
    By Lemma \ref{newlem5} (i),
    \begin{equation} \label{gjm}
    g_j=0~~~j \in \m(T).
    \end{equation}
    Let \[f_j:=g_j~~~ j=1:s~~\mbox{and}~~
    f:=(f_{1}, f_{2},\dotsc,f_s)'.\]
By Lemma \ref{newlem5} (iii), $f$ is non-zero and
$L_{\p_{rs}}f=\lambda f$. 
In view of Theorem \ref{th1}
\begin{equation}\label{cosrs}
  \lambda=2\left(1-\cos\left( \frac{\pi \xi_{rs}}{d(r,s)+1}\right)\right),
\end{equation}
where $\xi_{rs}$ is some integer in $[0,d(r,s)]$. Since $\lambda$ is non-zero, $\xi_{rs} \in (0,d(r,s)]$. 
Therefore,
\begin{equation}\label{fral}
0<\frac{\xi_{rs}}{d(r,s)+1} < 1.    
\end{equation}
Let $a_1$ and $a_2$ be positive integers such that
 \begin{equation} \label{gcdan}
\mbox{gcd}(a_1,a_2)=1~~\mbox{and}~~\frac{\xi_{rs}}{d(r,s)+1}=\frac{a_1}{a_2}.
\end{equation}
Since $\cos$ is a decreasing function in $(0,1)$, equations \eqref{cosrs} and \eqref{fral} imply that
\eqref{gcdan}
holds for any two distinct pendant vertices. 
This proves the claim.

By Theorem \ref{th1},
\begin{equation}\label{gjk+01}
f_{k+1}=\cos \left(\frac{\pi \xi_{rs}}{d(r,s)+1}\left(k+\frac{1}{2}\right)\right).    
\end{equation} 
Since $f_{k+1}=g_{k+1}$, \eqref{gjm} implies
\begin{equation} \label{gjk+1}
 f_{k+1}=0.
\end{equation}
In view of \eqref{gcdan}, \eqref{gjk+01} and \eqref{gjk+1}, we conclude that there exists an integer $t$ such that 
\begin{equation}\label{gcd1}
 \frac{a_1}{a_2}(2k+1)=2t+1. 
\end{equation}
In view of \eqref{gcdan} and \eqref{gcd1}, we see that 
there exist integers $\theta$ and $\eta$ such that
\begin{equation} \label{aiodd}
a_1=2\theta+1~~\mbox{and}~~a_2=2 \eta+1.
\end{equation}
Equation \eqref{gcdan} implies that
$a_2$ divides $d(r,s)+1$. Therefore, 
\[d(r,s)\equiv (a_2-1)\,{\rm{mod}}\,a_2.\]
By \eqref{aiodd},
\begin{equation*}\label{finaleq}
  d(r,s) \equiv 2\eta\,{\rm{mod}}\,(2\eta+1).  
\end{equation*}
This completes the proof.
\end{proof}

\section{Results}
We now present our findings of this paper. The main result is given first. 

\begin{theorem}\label{main}
 Let $T$ be a tree. If $T$ is not a path, then the following are equivalent.
 \begin{enumerate}
     \item[\rm{(i)}]  There exists an eigenvalue $\lambda$ of $L_T$ with multiplicity $|\pt(T)|-1$.
     \item[\rm{(ii)}]There exists an integer $q$ such that $$d(\alpha,\beta)\equiv 2q\,{\rm{mod}}\,(2q+1)~~\text{for all distinct}~~\alpha,\beta\in \pt(T).$$
 \end{enumerate}
 \end{theorem}

\begin{proof}
Since $T$ is not a path, it has a major vertex. Now, the proof of (i) $\Rightarrow$ (ii) follows from Lemma \ref{necessary} and (ii) $\Rightarrow$ (i) follows from Lemma \ref{sufficient}. This ends the proof.
\end{proof}
We now assert that if $m_{T}(\lambda)=|\pt(T)|-1$, then $\lambda$ has
a special form.
\begin{theorem}\label{th1m}
Let $T$ be a tree with at least one major vertex. If $L_T$ has an eigenvalue $\lambda$ with multiplicity $|\pt(T)|-1$, then there exist integers $b$ and $q$ such that \[\lambda=2\left(1-\cos\left(\frac{2b+1}{2q+1}\pi\right)\right),\] where $b\in (0,q]$.
\end{theorem}

\begin{proof}
The proof follows from equations \eqref{cosrs}, \eqref{gcdan} and \eqref{aiodd}.
\end{proof}

As a consequence, we have the following result.
\begin{corollary}
 Let $T$ be a tree. If $L_T$ has an eigenvalue $\lambda$ with multiplicity $|\pt(T)|-1$, then $\lambda \in [0,4)$.   
\end{corollary}

\begin{proof}
If $T$ is a path, then the proof follows from Theorem \ref{th1}. If $T$ has a major vertex, then the proof follows from Theorem \ref{th1m}. 
The proof is complete.
\end{proof}
The Laplacian matrices of several trees have $1$ as an eigenvalue. 
All trees $T$ such that $m_{T}(1)=|\pt(T)|-1$ have an interesting characterization.

\begin{theorem}\label{cor1}
    Let $T$ be a tree and $p:=|\pt(T)|$. Then, $1$ is an eigenvalue of $L_T$ with multiplicity $p-1$ if and only if $d(\alpha,\beta) \equiv 2\,{\rm{mod}}\,3$ for all distinct pendant vertices $\alpha$ and $\beta$.
\end{theorem}
\begin{proof}
If $T$ is a path, then the result follows from Theorem \ref{th1}. Suppose $T$ has at least one major vertex. 
Let $1$ be an eigenvalue
of $L_T$ with multiplicity $p-1$. 
By Theorem \ref{main}, there exists an integer $q$ such that
for any two distinct pendant vertices $\alpha$ and $\beta$,
\begin{equation}\label{th51}
d(\alpha,\beta)\equiv 2q\,{\rm{mod}}\,(2q+1).
\end{equation}
By Theorem \ref{th1m}, there exists an integer $b \in [0,q)$ such that
\begin{equation}\label{th52}
  1=2\left( 1-\cos\left(\frac{2b+1}{2q+1}\pi \right) \right). 
\end{equation}
Since $0\leq b<q$, \eqref{th52} holds if and only if 
\[\frac{2b+1}{2q+1}=\frac{1}{3}.\] Thus, $3$ divides $(2q+1)$. Hence,
\begin{equation}\label{th53}
 (2q+1) \equiv 0\,{\rm{mod}}\,3.   
\end{equation}
By \eqref{th51} and \eqref{th53}, it follows that for all distinct pendant vertices $\alpha$ and $\beta$,  
$$d(\alpha,\beta)\equiv 2\,{\rm{mod}}\,3.$$

The converse is obtained by specializing $q=1$ 
and $\lambda=2\left(1-\cos\frac{\pi}{3}\right)=1$ in
Lemma \ref{sufficient}. This concludes the proof. 
\end{proof}

\begin{figure}[ht!]
\centering
\includegraphics[scale=0.3]{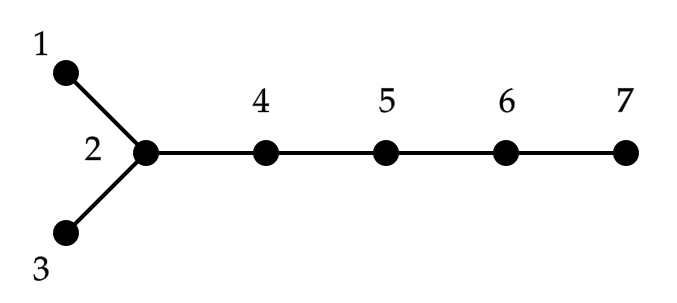}
\caption{$T$}\label{diagp}
\end{figure}

\begin{example}
{\em{Let $T$ be the above tree. Then, $|\pt(T)|=3$. If $\alpha$ and $\beta$ are any two distinct vertices of $T$, then $d(\alpha,\beta)\equiv 2\,{\rm{mod}}\,3$. By Theorem \ref{cor1}, $1$ is an eigenvalue of $L_T$ with multiplicity two.}} 
\end{example}

\section*{Declaration of competing interests}
\noindent
We declare that we have no known competing financial interests or personal relationships that could have appeared to influence the work reported in this paper.

\section*{Funding}
\noindent
This research did not receive any specific grant from funding agencies in the public, commercial, or not-for-profit sectors.


\begin{thebibliography}{00}
\bibitem{barik}
S. Barik, A.K. Lal, S. Pati, On trees with Laplacian eigenvalue one, Linear Multilinear Algebra
56 (6) (2008) 597--610. \href{https://doi.org/10.1080/03081080600679029}{https://doi.org/10.1080/03081080600679029}

\bibitem{faria}
I. Faria, Permanental roots and the star degree of a graph, Linear Algebra Appl. 64 (1985) 255--265. \href{https://doi.org/10.1016/0024-3795(85)90281-2}{https://doi.org/10.1016/0024-3795(85)90281-2}

\bibitem{grone}
R. Grone, R. Merris, V.S. Sunder, The Laplacian spectrum of a graph, SIAM J. Matrix Anal. Appl.
11 (2) (1990) 218--238. \href{https://doi.org/10.1137/0611016}{https://doi.org/10.1137/0611016}

\bibitem{notes}
D.A. Spielman, Lecture Notes on Spectral Graph Theory: Fundamental Graphs. 2018. Yale University. \href{https://www.cs.yale.edu/homes/spielman/561/lect03-18.pdf}{https://www.cs.yale.edu/homes/spielman/561/lect03-18.pdf}

\bibitem{wang}
L. Wang, L. Wei, Y. Jin, The multiplicity of an arbitrary eigenvalue of a graph in terms of cyclomatic number and number of pendant vertices, Linear Algebra Appl. 584 (2020) 257--266. \href{https://doi.org/10.1016/j.laa.2019.09.013}{https://doi.org/10.1016/j.laa.2019.09.013} 

\bibitem{wong2}
D. Wong, J. Zhao, Z. Wang, C. Zhang, 2024. A characterization of trees with eigenvalue multiplicity one less than their number of pendant vertices. Discrete Math. 347 (4), p.113845. \href{https://doi.org/10.1016/j.disc.2023.113845}{https://doi.org/10.1016/j.disc.2023.113845}


\bibitem{wong1}
Y. Zhang, J. Zhao, D. Wong, 2024. A characterization for a graph with an eigenvalue of multiplicity $2c (G)+ p (G)-1$. Discrete Math. 347 (7), p.114028. \href{https://doi.org/10.1016/j.disc.2024.114028}{https://doi.org/10.1016/j.disc.2024.114028}









\end{thebibliography}
\end{document}